\documentclass{amsart}

\usepackage{amsmath, amssymb, amsthm}
\PassOptionsToPackage{hyphens}{url}
\usepackage{hyperref}
\usepackage{microtype}

\newtheorem{theorem}{Theorem}
\newtheorem{lemma}[theorem]{Lemma}
\newtheorem{corollary}[theorem]{Corollary}
\newtheorem{proposition}[theorem]{Proposition}
\theoremstyle{definition}
\newtheorem{definition}[theorem]{Definition}
\newtheorem{remark}[theorem]{Remark}
\newtheorem{claim}[theorem]{Claim}

\newcommand{\D}{\mathbb{D}}
\newcommand{\Z}{\mathbb{Z}}
\newcommand{\Zi}{\mathbb{Z}[i]}
\newcommand{\C}{\mathbb{C}}
\newcommand{\R}{\mathbb{R}}
\newcommand{\abs}[1]{\left|#1\right|}
\newcommand{\OD}{\mathcal{O}(\D)}
\newcommand{\mR}{\mathcal{R}}
\newcommand{\mRR}{\mathcal{R}_{\mathbb{R}}}

\title{Integer-Coefficient Power Series with Prescribed Zero Sets}

\author{Jon Bannon}
\address{Department of Mathematics and Statistics, Siena University,
  Loudonville, NY 12211}
\email{jbannon@siena.edu}

\author{David Feldman}
\address{Department of Mathematics and Statistics, The University of
  New Hampshire, Durham, NH 03824}
\email{david.feldman@unh.edu}

\subjclass[2020]{Primary 30B10, 30C15; Secondary 30H50, 11C08}
\keywords{Integer power series, prescribed zeros, effective divisor,
  Weierstrass product, canonical product, complex conjugation,
  Gaussian integers, function algebra}

\thanks{Theorems~\ref{thm:main} and~\ref{prop:Zi} have been formalized
  and machine-checked in the Lean~4 proof assistant (no
  \texttt{sorry}s, no axioms beyond Mathlib's standard
  \texttt{propext}, \texttt{Classical.choice}, and \texttt{Quot.sound}):
  see \url{https://github.com/JonBannon/WeierstrassFormalization},
  with Theorem~\ref{thm:main} proved in
  \url{https://github.com/JonBannon/WeierstrassFormalization/blob/master/WeierstrassFormalization/MainTheorem.lean}.}
\date{}

\begin{document}

\begin{abstract}
We prove that a discrete effective divisor on the open unit disk $\D$
is the zero divisor of a holomorphic function on $\D$ with integer
Taylor coefficients if and only if it is invariant under complex
conjugation.  The construction uses a one-parameter deformation of the
Weierstrass elementary factors in which each modified factor of order
$n$ leaves all Taylor coefficients of degree $\leq n$ unchanged while
shifting the coefficient of degree $n+1$ by a controlled affine amount.
These modified factors act as elementary jet-correction operators: the
triangular structure of the coefficient map permits an inductive
rounding scheme compatible with canonical-product convergence.  As a
consequence, every holomorphic function on $\D$ differs from one with
Gaussian-integer Taylor coefficients by multiplication by a
nowhere-vanishing holomorphic factor.
\end{abstract}

\maketitle

\section{Introduction}

Transcendental number theory rests on the special nature of roots of
polynomials with integer coefficients.  Considering instead roots of
power series with integer coefficients changes the picture in two
immediate ways.  First, a non-polynomial integer-coefficient
power series has radius of convergence at most~$1$, so if it converges
anywhere in $\C \setminus \Z$ it converges inside the unit disk $\D =
\{z \in \C : \abs{z} < 1\}$, and analytic hypotheses must enter the
story.  Second, the set of such series convergent on $\D$ is
uncountable, and any single point $z_0 \in \D$ is a root of infinitely
many of them: the evaluation map $f \mapsto f(z_0)$ is an additive
homomorphism on the uncountable group $\Z[[z]] \cap \OD$, and its
kernel is correspondingly large (Theorem~\ref{thm:main}, applied to the
conjugation-stable divisor $[z_0] + [\bar z_0]$, already produces such
series, in infinitely many ways since the construction is
non-canonical).  Individual roots thus carry no arithmetic content.

The natural analogue asks which zero sets of holomorphic functions
on $\D$ arise as the zero sets of integer power series.  An
\emph{effective divisor} on $\D$
is a locally finite formal sum $D = \sum_{a \in S} m_a [a]$ with
non-negative integer multiplicities over a discrete subset $S \subset
\D$, encoding a zero set with multiplicities.  For $D$ to be the zero
divisor of an integer-coefficient holomorphic function, the discreteness
of $S$ is necessary (and automatic), and $D$ must be \emph{invariant
under complex conjugation} ($m_{\bar a} = m_a$ for all $a$), since
$f(\bar z) = \overline{f(z)}$ for real-coefficient power series forces
nonreal zeros into conjugate pairs.  Conjugation invariance turns out
to be the only obstruction: it is also sufficient.

\begin{theorem}
\label{thm:main}
An effective divisor $D$ on $\D$ is the zero divisor of a holomorphic
function on $\D$ with Taylor coefficients in $\Z$ if and only if $D$
is invariant under complex conjugation.
\end{theorem}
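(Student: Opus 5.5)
Necessity is immediate: if $f=\sum c_n z^n$ with $c_n\in\Z$, then $f(\bar z)=\overline{f(z)}$. Hence $z\mapsto \overline{f(\bar z)}$ coincides with $f$, and conjugation carries zeros of $f$ to zeros of the same order, so $m_{\bar a}=m_a$. The work is sufficiency. The plan is to build $f$ as an infinite product $f=\prod_k P_k$ of real-coefficient factors, one per conjugation orbit of zeros. Each orbit is either a real point $a$, or a pair $\{a,\bar a\}$ with the pair's factor being the product of the factors for $a$ and $\bar a$. The factors are modified Weierstrass elementary factors carrying a free real parameter, tuned one at a time so that the Taylor coefficients of the partial products become integers in increasing degree.

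First I set up the elementary factors. Enumerate the orbits $\{a_k\}$ with $|a_k|$ nondecreasing; if the divisor is finite, append auxiliary real points later (see below). For $|a|<1$ the Weierstrass factor for the disk, $E_n(z,a)$, vanishes exactly at $a$, for example in the form $(1-z/a)\exp(\sum_{j\le n}(z/a)^j/j)$ after a Möbius normalization. Its key property is $E_n(z,a)=1+O(z^{n+1})$, and its coefficient of $z^{n+1}$ is $-\tfrac{1}{n+1}a^{-(n+1)}$. I deform it to $E_{n,t}(z,a)=E_n(z,a)\exp(t z^{n+1})$. This factor still agrees with $1$ through degree $n$, and its $z^{n+1}$ coefficient is shifted affinely by $t$. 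For a conjugate pair I use $E_{n,t}(z,a)E_{n,t}(z,\bar a)$ with real $t$, which keeps real coefficients. The catch is that $a^{-(n+1)}$ is large, so the factor is not close to $1$ on the disk. To restore convergence I use the standard disk version: with $b=a/|a|$, take $\bigl(\frac{b-z}{1-\bar a z}\bigr)$-type factors times $\exp$ of a Taylor polynomial of a logarithm. Choosing $n=n_k$ growing fast makes $|1-\text{factor}|\le \varepsilon_k$ on $|z|\le r_k$, where $r_k\to1$. I then multiply by $\exp(t_k z^{n_k+1})$ with $|t_k|\le 1$, which only costs a factor $e^{r_k^{n_k+1}}$. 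By choosing $n_k$ large this is also $\le 1+\varepsilon_k$ on $|z|\le r_k$, and in fact on all of $|z|\le r_j$ for $j\le k$. The normalization "$=1+O(z^{n+1})$" must hold exactly, which I arrange by dividing by the value at $0$ and absorbing the low-order log terms into the exponential polynomial.

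Second comes the inductive rounding. Let $F_{k}=\prod_{j\le k}P_j$, where each $P_j$ has real coefficients and $P_j=1+O(z^{n_j+1})$, with $n_1<n_2<\cdots$ strictly increasing. Then multiplying by $P_{k+1}$ does not change the coefficients of $F_k$ in degrees $\le n_{k+1}$. Its effect on degree $n_{k+1}+1$ is $c\mapsto c+\alpha+t\cdot F_k(0)$, and $F_k(0)=1$. This is the triangular structure. Suppose the coefficients of $F_k$ in degrees $\le n_{k}+1$ are already integers. I need those in degrees $n_k+2,\dots,n_{k+1}+1$ to be integers as well. A single parameter fixes only one degree, so between orbit factors I insert zero-free correction factors $\exp(s z^m)$ for every intermediate degree $m$, with $s\in(-1,1]$ chosen to round the degree-$m$ coefficient. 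These factors are $1+sz^m+O(z^{2m})$, so with leading coefficient of $F$ equal to $1$ they shift the degree-$m$ coefficient by exactly $s$. Equivalently, I take $P_{k+1}$'s parameter to be a full polynomial $\exp(\sum_{m=n_k+2}^{n_{k+1}+1}s_m z^m)$ and determine the $s_m$ successively, upward in $m$. Each $|s_m|\le 1$, and the total perturbation on $|z|\le r$ is bounded by $\exp(\sum_{m>n_k}r^m)$.

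Third, convergence, which I expect to be the main obstacle. The corrections must keep $\sum_k|P_k-1|$ locally uniformly summable. The exponential sum $\sum_m s_m z^m$ over all $m$ is a power series with bounded coefficients, hence holomorphic on $\D$. So the total correction is $\exp(g)$ with $g$ holomorphic on $\D$, and it introduces no zeros. The Weierstrass parts converge by the Blaschke-type estimate once $n_k$ grows fast relative to $1-|a_k|$. The subtle point is that the $s_m$ depend on the whole history, but uniform boundedness $|s_m|\le1$ suffices for the limit. The limit $f$ has integer coefficients, since each coefficient stabilizes after finitely many steps. Its zero divisor is exactly $D$, because the exponential factors are nonvanishing and the product converges locally uniformly. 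For $D$ finite, or even empty, the same scheme works with only finitely many orbit factors followed by pure exponential corrections; the result is $f=Q e^{g}$ with integer coefficients. The remaining care point is bookkeeping in degrees $\le n_1+1$: the initial factor must have constant term exactly $1$, or else an integer, which I arrange by normalizing each $E$ factor at $0$ (every $a_k\neq0$, while zeros at $0$ are handled by an explicit $z^{m_0}$).
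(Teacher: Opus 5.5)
Your proof is correct in substance and uses the same mechanism as the paper, organized differently. Unwinding the paper's definition gives $E_n(z/a;c)=E_n(z/a;1)\exp(\tau z^{n+1})$ with $\tau=(c-1)/((n+1)a^{n+1})$, where $E_n(\cdot\,;1)$ is a classical Weierstrass factor. The bound \eqref{eq:cnbound} says exactly $\abs{\tau}\le\sqrt2/2$. So the paper's product is in effect also $W\cdot e^{g}$, with $W$ a classical Weierstrass product and $g$ a power series with bounded coefficients; the term $\frac{\sqrt2}{2}r^{n+1}$ in Step~3 is just the estimate for $g$.

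The paper ties each rounding to a zero: it takes order equal to index, so each zero (or conjugate pair, via a complex $c_n$) corrects exactly one new degree. You instead let the orders $n_k$ jump and fill the gaps with zero-free correctors $\exp(s_m z^m)$. This decouples rounding from zero placement, and convergence of the correction part is immediate from $\abs{s_m}\le 1$. Your real $t$ for a pair is also a more direct parametrization, since only $2\operatorname{Re}\alpha$ matters.

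The paper's route is more economical, since it needs no auxiliary factors. Yours is more robust, in particular for finite divisors, which you handle uniformly as $Qe^{g}$. The paper's one-sentence treatment of finite $S$ overlooks that $N$ modified factors only force the coefficients of degree $\le N$. So zero-free correctors like yours, or those of Proposition~\ref{prop:nv}, are needed there in general.

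One thing to fix: the ``catch'' that motivates your M\"obius detour is not real, and the detour as written is wrong. Your classical factor satisfies $\abs{1-E_n(z,a)}\le\abs{z/a}^{n+1}$ for $\abs{z}\le\abs{a}$. Hence for $\abs{z}\le r$ and all $k$ with $\abs{a_k}\ge s>r$ you get $\abs{1-E_{n_k}(z,a_k)}\le (r/s)^{n_k+1}$. This is summable for any strictly increasing $n_k$, so the plain factors already give a locally uniformly convergent product on $\D$; this is Step~3 of the paper. By contrast, $\frac{b-z}{1-\bar a z}$ vanishes at $b\in\partial\D$, not at $a$. Renormalizing by the exponential of a degree-$n_k$ polynomial would also need its own size estimate. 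Just drop that paragraph.
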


The necessity of conjugation invariance is immediate; sufficiency is
the content.  Conceptually, the result may be viewed as an
interpolation theorem inside the coefficient-restricted algebra
$\Z[[z]] \cap \OD$: every divisor class has an arithmetic
representative.  The construction is non-canonical: different orderings
of the zeros and different rounding choices yield different realizing
functions, and the coefficient sequences can vary substantially with
these choices.

The technical heart of the paper is the following result, from which
Theorem~\ref{thm:main} is deduced.

\begin{theorem}[Gaussian-integer realization]
\label{prop:Zi}
Every effective divisor on $\D$ is the zero divisor of a holomorphic
function on $\D$ with Taylor coefficients in the Gaussian integers
$\Zi$.
\end{theorem}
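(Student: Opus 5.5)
We build a Weierstrass-type product
\[
F(z)=\prod_{k\ge1} E_{n_k}(z/a_k;\,t_k),
\]
where $E_n(u;t)=(1-u)\exp\bigl(u+\tfrac{u^2}{2}+\dots+\tfrac{u^n}{n}+t\,u^{n+1}\bigr)$ is a one-parameter deformation of the elementary factor. The zeros $a_1,a_2,\dots$ are listed with multiplicity, with $|a_k|$ nondecreasing and $|a_k|\to1$ (a finite divisor is handled by appending dummy factors that have no zeros). We also allow factors $\exp(c\,z^{m})$, which introduce no zeros, so the case $D=0$ is covered. The key algebraic observation is that
\[
E_n(z/a;t)=E_n(z/a;0)\,\bigl(1+t z^{n+1}/a^{n+1}+O(z^{n+2})\bigr).
\]
Hence multiplying a power series $G$ with $G(0)=1$ by $E_n(z/a;t)$ leaves the coefficients of degree $\le n$ unchanged from those of $G\cdot E_n(z/a;0)$. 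Moreover $E_n(z/a;0)=1-\tfrac{z^{n+1}}{(n+1)a^{n+1}}+O(z^{n+2})$, so it too fixes coefficients of degree $\le n$. The coefficient of degree $n+1$ is affine in $t$, with slope $a^{-(n+1)}\neq0$. Since $\Zi$ is a lattice with covering radius $1/\sqrt2$, we can choose $t$ so that this coefficient lands in $\Zi$, with
\[
|t|\le |a|^{n+1}\Bigl(\tfrac1{\sqrt2}+\tfrac{1}{(n+1)|a|^{n+1}}\Bigr)\le 1 .
\]

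The induction runs as follows. Choose orders $n_1<n_2<\cdots$ strictly increasing and growing fast enough that $\sum_k r^{n_k}/|a_k|^{n_k}<\infty$ for every $r<1$. For instance, take $n_k\ge k$ and large enough that $(|a_k|)^{-n_k}\le$ something subexponential; in fact $n_k=k$ already suffices when $\sum(1-|a_k|)$ diverges slowly, and in general one takes $n_k$ large. Set $G_0=1$ and $G_k=G_{k-1}E_{n_k}(z/a_k;t_k)$. Suppose by induction that the coefficients of $G_{k-1}$ of degree $\le n_{k-1}+1$ lie in $\Zi$. The coefficients of degrees $n_{k-1}+2,\dots,n_k$ may not be Gaussian integers. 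To repair them we interpolate exponential correction factors $\exp(s_j z^j)$ with $s_j$ rounded one degree at a time, each of size $\le1/\sqrt2$. Equivalently, we insert zero-free deformed factors of order $j-1$; this is exactly the "jet-correction operator" role described in the abstract. After this, the factor $E_{n_k}(z/a_k;t_k)$ fixes everything through degree $n_k$ and $t_k$ fixes degree $n_k+1$. All later factors preserve coefficients of degree $\le n_k+1$, so each coefficient stabilizes at a Gaussian integer.

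The analytic half is convergence on compacta of $\D$. For $|z|\le r$, each factor satisfies
\[
|\log E_{n}(z/a;t)-\log(1-z/a)-\textstyle\sum_{j\le n}(z/a)^j/j|\le |t|\,(r/|a|)^{n+1},
\]
and the standard estimate gives $|\log E_n(u;0)|\le 2|u|^{n+1}$ for $|u|\le1/2$. Only finitely many $a_k$ lie in $|z|\le 2r$, so it suffices that
\[
\sum_k (r/|a_k|)^{n_k+1}<\infty \quad\text{and}\quad \sum_j |s_j| r^j<\infty .
\]
The second sum is automatic since $|s_j|\le1/\sqrt2$. The first follows if $n_k\ge k$: for $|a_k|\ge 2r$ eventually the terms are at most $2^{-k}$. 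Then the product converges locally uniformly to a holomorphic $F$ with zero divisor exactly $D$, because the correction factors are zero-free. Since Taylor coefficients converge under locally uniform convergence, the coefficients of $F$ equal the stabilized Gaussian integers.

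The main obstacle is that the bound $|t_k|\le1$ depends on $|a_k|<1$ and that the correction $t_k u^{n+1}$ must not spoil convergence. Both issues are handled by the estimates above, because $|t_k|(r/|a_k|)^{n_k+1}$ is summable. The bookkeeping that interleaves rounding with the product ordering also needs care: the normalization $G(0)=1$ must be kept throughout, and the $1-u$ factors must not disturb the lower-degree coefficients. The proof of Theorem~\ref{thm:main} would then follow by applying this to $D$ and taking $F\bar F^{*}$-type combinations, which is outside the scope of this proof.
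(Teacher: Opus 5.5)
Your argument is sound in substance and uses the same mechanism as the paper. A one-parameter deformation of $E_n$ moves only the degree-$(n+1)$ coefficient, and rounding proceeds one degree at a time. Your $t$ is the paper's $c/(n+1)$, based at $t=0$ rather than at the classical $c=1$.

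The real difference is bookkeeping. The paper attaches the orders $0,1,2,\dots$ to $a_1,a_2,\dots$, so each zero-carrying factor rounds exactly one coefficient and no other factors appear. You decouple orders from zeros and fill the gaps with zero-free correctors $\exp(s_jz^j)$. That device buys two things.

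\textbf{Finite divisors.} Your correctors treat them correctly. The paper's claim that for nonempty finite $S$ the construction simply terminates does not hold. A finite product of the $E_n$ is entire, and an entire function with $\Zi$ coefficients is a polynomial (its coefficients tend to $0$), whose zeros are algebraic. So, for instance, $D=[1/\pi]$ cannot be realized that way. The paper would need dummy zeros outside $\overline{\D}$, as in Proposition~\ref{prop:nv}, or your correctors.

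\textbf{The correctors suffice on their own.} Take any canonical product $P$ with divisor $D$ (after removing the origin) and $P(0)=1$. Choose $s_J$ inductively so that $[z^J]\bigl(P\exp(\sum_{j\le J}s_jz^j)\bigr)\in\Zi$. Then $|s_j|\le\sqrt2/2$ makes $h=\sum_j s_jz^j$ holomorphic on $\D$, and $Pe^{h}$ works. Neither the parameters $t_k$ nor a bound like \eqref{eq:cnbound} is needed. In exchange, the paper's route yields a pure modified canonical product, in which every correction is carried by a zero-carrying factor.

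Three details need repair.

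\textbf{Convergence.} Your convergence paragraph is the entire-function estimate. The claim ``only finitely many $a_k$ lie in $|z|\le2r$'' and the bound for $|u|\le1/2$ both fail once $r\ge1/2$, so as written you only get convergence on $|z|<1/2$. Instead fix $s\in(r,1)$. All but finitely many $a_k$ satisfy $|a_k|\ge s$, and for those, on $|z|\le r$,
$|\log E_{n_k}(z/a_k;t_k)|\le\bigl((1-r/s)^{-1}+|t_k|\bigr)(r/s)^{n_k+1}$.
This is summable whenever $n_k\ge k$, however slowly $|a_k|\to1$. This is exactly the point of the paper's Step~3, and it makes your remark about $\sum(1-|a_k|)$ unnecessary.

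\textbf{The bound on $t$.} The claim $|t|\le1$ does not follow from your own bound $|a|^{n+1}/\sqrt2+1/(n+1)$ when $n$ is small. However, $|t|\le1+\sqrt2/2$ is all you need. Alternatively, rounding the degree-$(n+1)$ coefficient of $G\cdot E_n(z/a;0)$ gives $|t|\le|a|^{n+1}/\sqrt2$.

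\textbf{Zero at the origin.} It must first be split off as a factor $z^{m_0}$, since $z/a_k$ and the normalization $G(0)=1$ require $a_k\ne0$.
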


Theorem~\ref{thm:main} follows from Theorem~\ref{prop:Zi} by a
conjugate-pairing argument: given a conjugation-stable effective
divisor $D$, apply Theorem~\ref{prop:Zi} to obtain $g$ with zero
divisor $D$ and $\Zi$-coefficients, then pair each complex zero with
its conjugate to force all coefficients into $\Z$
(Section~\ref{sec:proofs}).

The construction common to both results deforms each classical
Weierstrass elementary factor by a single free parameter, governed by
a simple principle: each modified factor of order $n$ leaves all
Taylor coefficients of degree $\leq n$ unchanged, while shifting the
coefficient of degree $n+1$ by a controlled affine amount.  This
triangular structure allows coefficients to be rounded to integers
one at a time without disturbing previously corrected coefficients and
without affecting the convergence of the product.

\medskip\noindent\textbf{Proof architecture.}
Section~\ref{sec:factors} defines the modified factors and establishes
their two key properties: exact coefficient-vanishing below order $n$
(Lemma~\ref{lem:structure}), and affine control of the coefficient at
order $n+1$ (Lemma~\ref{lem:affine}).  Section~\ref{sec:proofs} proves
Theorem~\ref{prop:Zi} by an inductive rounding argument, then deduces
Theorem~\ref{thm:main} by the conjugate-pairing device.
Section~\ref{sec:example} illustrates the first two steps of the
induction concretely.  Section~\ref{sec:algebra} derives the
ring-theoretic consequences.

\medskip

The question of which holomorphic functions have integer Taylor
coefficients has a classical literature.  The P\'olya--Carlson theorem
\cite{PC} characterizes rational functions among power series with
integer coefficients and radius of convergence~$1$.  Results on
integer-valued entire functions \cite{Gel} identify growth conditions
that force rationality.  The prescribed-zero-set problem itself is
classical without arithmetic constraints: Blaschke products realize
any Blaschke sequence as a zero set of a bounded holomorphic function
on $\D$, and canonical products \cite{Conway,Levin} handle the general
discrete case.  Our result combines both perspectives: we show that
prescribed zero sets can always be realized by integer-coefficient
functions, with no arithmetic constraint beyond conjugation invariance.

The closest conceptual antecedents are Borel's interpolation theorem
(every sequence of values can be prescribed along a discrete sequence
for a smooth function, or for an entire function on the whole plane)
and universal Taylor series (entire functions whose partial sums
approximate arbitrary functions on arbitrary compact sets).  Neither
framework imposes arithmetic constraints on coefficients, and neither
directly addresses the convergence-on-a-disk setting.  For rings of
holomorphic functions with arithmetic coefficient conditions, see
\cite{Stout} for background on analytic function algebras.  We are not aware of a prior result combining prescribed divisors with
integrality of Taylor coefficients, though our literature search was
not exhaustive.

The same construction, with zeros placed outside $\overline{\D}$,
produces holomorphic functions on $\D$ with Gaussian-integer Taylor
coefficients and no zeros at all.  Section~\ref{sec:algebra} uses this
and Theorem~\ref{prop:Zi} to derive ring-theoretic consequences for
the subrings $\mR = \Zi[[z]] \cap \OD$ and $\mRR = \Z[[z]] \cap \OD$.

\section{Modified Elementary Factors}
\label{sec:factors}

\begin{definition}
For $n \geq 0$ and $c \in \C$, the \emph{modified elementary factor}
of order $n$ with parameter $c$ is
\[
  E_n(w;\,c)
  = (1-w)\exp\!\Bigl(\sum_{k=1}^{n}\frac{w^k}{k} + \frac{c\,w^{n+1}}{n+1}\Bigr).
\]
\end{definition}

For $c = 1$ this is the classical Weierstrass factor $E_n(w)$.  The
crucial properties are collected in the following lemma.

\begin{lemma}[Structure of the modified factor]
\label{lem:structure}
For all $n \geq 0$, $c \in \C$, and $w \in \D$,
\begin{equation}
\label{eq:Eexp}
  E_n(w;\,c) = \exp\bigl(G_n(w;\,c)\bigr),
\end{equation}
where
\begin{equation}
\label{eq:Gdef}
  G_n(w;\,c)
  = \frac{(c-1)\,w^{n+1}}{n+1} - \sum_{k=n+2}^{\infty}\frac{w^k}{k}.
\end{equation}
In particular:
\begin{enumerate}
\item[(i)] $E_n(0;\,c) = 1$.
\item[(ii)] $[w^m]\,E_n(\,\cdot\,;\,c) = 0$ for $1 \leq m \leq n$,
  independently of $c$.
\item[(iii)] $[w^{n+1}]\,E_n(\,\cdot\,;\,c) = (c-1)/(n+1)$,
  linear in $c$ with slope $1/(n+1)$.
\item[(iv)] $E_n(w;\,c) \neq 0$ for all $w \in \D$.
\item[(v)] Although \eqref{eq:Eexp} holds only on $\D$, the original
  definition extends to an entire function of $w \in \C$, which has a
  simple zero at $w = 1$ and no other zeros.
\end{enumerate}
\end{lemma}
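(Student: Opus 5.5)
The plan is to derive the exponential form \eqref{eq:Eexp} from the principal branch of the logarithm and then read off (i)--(v) directly.

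For $w \in \D$ the series $\log(1-w) = -\sum_{k\ge1} w^k/k$ converges absolutely, and $1-w = \exp(\log(1-w))$. Substituting into the definition gives
\[
  E_n(w;\,c) = \exp\Bigl(-\sum_{k=1}^{\infty}\frac{w^k}{k} + \sum_{k=1}^{n}\frac{w^k}{k} + \frac{c\,w^{n+1}}{n+1}\Bigr).
\]
The terms with $k\le n$ cancel. The $k=n+1$ term combines with the $c$-term to give $(c-1)w^{n+1}/(n+1)$, and what remains is $-\sum_{k\ge n+2}w^k/k$. This is exactly $G_n(w;\,c)$ as in \eqref{eq:Gdef}.

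The consequences follow in order.
\begin{itemize}
\item[(i)] $G_n(0;\,c)=0$, so $E_n(0;\,c)=e^0=1$.
\item[(ii), (iii)] $G_n(\cdot\,;c)$ is a power series with vanishing coefficients in degrees $0,\dots,n$, so $G_n = O(w^{n+1})$. Expanding $\exp(G_n) = 1 + G_n + G_n^2/2! + \cdots$, every power $G_n^j$ with $j\ge2$ is $O(w^{2n+2})$. Since $2n+2 > n+1$ for all $n\ge0$, the Taylor coefficients of $E_n$ in degrees $1$ through $n+1$ coincide with those of $G_n$. These are $0$ for $1\le m\le n$ and $(c-1)/(n+1)$ for $m=n+1$. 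The manipulation is legitimate because composing a convergent power series with the entire function $\exp$ gives coefficients computed formally, or equivalently because both sides are holomorphic on $\D$ and agree there.
\item[(iv)] This is immediate, since an exponential never vanishes.
\item[(v)] The definition is the product of the polynomial $1-w$ and the exponential of a polynomial, so it is entire. The exponential factor has no zeros on $\C$, so the only zero is the simple zero of $1-w$ at $w=1$.
\end{itemize}

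I do not expect a real obstacle. The one point needing care is the degree bookkeeping in (ii)--(iii), namely that the quadratic and higher terms of $\exp(G_n)$ start at degree $2n+2 \ge n+2$. The edge case $n=0$ should be checked explicitly: there the sum $\sum_{k=1}^{0}$ is empty and $G_0 = (c-1)w - \sum_{k\ge2} w^k/k$, and the same argument applies.
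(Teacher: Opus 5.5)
Your proposal is correct and follows the paper's proof essentially step for step. You rewrite $1-w$ via the series for $\log(1-w)$ on $\D$ to obtain $G_n$, read (i)--(iv) off the expansion $G_n = \frac{c-1}{n+1}w^{n+1} + O(w^{n+2})$ (your remark that $G_n^j = O(w^{2n+2})$ for $j \geq 2$ just spells out the degree bookkeeping the paper compresses into one line), and deduce (v) from the factorization $(1-w)e^{H(w)}$ with $H$ a polynomial.
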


\begin{proof}
Since $-\log(1-w) = \sum_{k=1}^{\infty} w^k/k$ for $w \in \D$, the
exponent in the definition equals
\[
  -\log(1-w) + \frac{(c-1)w^{n+1}}{n+1} - \sum_{k=n+2}^{\infty}\frac{w^k}{k},
\]
so $(1-w)\cdot(1-w)^{-1}\cdot\exp(G_n(w;\,c)) = \exp(G_n(w;\,c))$
for $w \in \D$, giving \eqref{eq:Eexp}--\eqref{eq:Gdef}.

(i): $G_n(0;\,c) = 0$.

(ii)--(iii): Since $G_n(w;\,c) = \tfrac{c-1}{n+1}w^{n+1} + O(w^{n+2})$,
we have $e^{G_n} = 1 + \tfrac{c-1}{n+1}w^{n+1} + O(w^{n+2})$,
giving (ii) and (iii) directly.

(iv): $e^{G_n}$ is nowhere zero on $\D$.

(v): From the definition, $E_n(w;\,c) = (1-w)\,e^{H(w)}$ where
$H(w) = \sum_{k=1}^n w^k/k + cw^{n+1}/(n+1)$ is an entire function;
since $e^{H(w)}$ is nowhere zero, the zeros of $E_n(\,\cdot\,;\,c)$ are
exactly those of $(1-w)$, i.e., the simple zero at $w=1$.
\end{proof}

\begin{remark}
Parts~(iv) and~(v) use different representations of $E_n$: (iv) uses
\eqref{eq:Eexp}, valid for $w \in \D$; (v) uses the original definition,
valid for all $w \in \C$.  They are consistent: as $w \to 1^-$ along
the real axis, $G_n(w;\,c) \to -\infty$, so $e^{G_n} \to 0$.
\end{remark}

The following lemma makes the triangular coefficient structure precise.

\begin{lemma}[Affine coefficient control]
\label{lem:affine}
Let $a \in \C \setminus \{0\}$, $n \geq 0$, and $h(z) =
\sum_{m \geq 0} h_m z^m$ a power series convergent near $0$ with
$h_0 = 1$.  For $F(z) = h(z)\cdot E_n(z/a;\,c)$:
\begin{enumerate}
\item[(i)] $[z^m]\,F = h_m$ exactly, for $0 \leq m \leq n$,
  independently of $c$; in particular, introducing the factor
  $E_n(z/a;\,c)$ leaves all coefficients of degree $\leq n$ unchanged.
\item[(ii)] $[z^{n+1}]\,F = h_{n+1} + \dfrac{c-1}{(n+1)a^{n+1}}$,
  affine in $c$ with nonzero slope $1/((n+1)a^{n+1})$.
\end{enumerate}
\end{lemma}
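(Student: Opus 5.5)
The plan is to get the Taylor expansion of $E_n(z/a;\,c)$ at $z=0$ from Lemma~\ref{lem:structure} by substituting $w=z/a$, and then read off the low-order coefficients of the Cauchy product $h(z)\,E_n(z/a;\,c)$. For $|z|<|a|$ we have $w=z/a\in\D$. By \eqref{eq:Eexp}, $E_n(z/a;\,c)=\exp(G_n(z/a;\,c))$, and this is holomorphic near $0$. Parts (i)--(iii) of Lemma~\ref{lem:structure} then give
\[
E_n(z/a;\,c) = 1 + \frac{c-1}{(n+1)a^{n+1}}\,z^{n+1} + O(z^{n+2}).
\]
This follows because the coefficient of $w^m$ becomes the coefficient of $z^m$ multiplied by $a^{-m}$. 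Write $e_m$ for the $z^m$-coefficient, so that $e_0=1$, $e_m=0$ for $1\le m\le n$, and $e_{n+1}=(c-1)/((n+1)a^{n+1})$.

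Next, both $h$ and $E_n(z/a;\,c)$ are holomorphic on a common disk about $0$, of radius $\min(|a|,\rho)$ where $\rho$ is the radius of convergence of $h$. So $F$ is holomorphic there, and its Taylor coefficients are given by the Cauchy product $[z^m]F=\sum_{j=0}^{m} h_{m-j}e_j$. For $m\le n$ only the term $j=0$ survives, which gives $h_m$; none of the surviving terms involves $c$, so this proves (i). For $m=n+1$ only $j=0$ and $j=n+1$ survive, giving $h_{n+1}+h_0e_{n+1}=h_{n+1}+\frac{c-1}{(n+1)a^{n+1}}$ because $h_0=1$. This is affine in $c$ with slope $1/((n+1)a^{n+1})\neq0$, since $a\ne0$. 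That proves (ii).

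There is no real obstacle here. The only point needing care is justifying the use of the expansion \eqref{eq:Eexp}, which is valid only for $w\in\D$. This is handled by restricting to the disk $|z|<|a|$, where the two representations of $E_n$ agree. Since Taylor coefficients at $0$ depend only on the germ, computing on that smaller disk suffices.
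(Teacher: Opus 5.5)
Your proposal is correct and follows the paper's proof essentially verbatim. You read off $e_0=1$, $e_1=\cdots=e_n=0$ and $e_{n+1}=(c-1)/((n+1)a^{n+1})$ from Lemma~\ref{lem:structure} via $w=z/a$, then evaluate the Cauchy product $[z^m]F=\sum_{j=0}^{m}h_{m-j}e_j$ for $m\le n$ and $m=n+1$, exactly as the paper does. Your remark about restricting to $|z|<|a|$ is harmless extra care: $E_n(z/a;\,c)$ is entire, so its Taylor coefficients at $0$ are determined by its germ there.
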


\begin{proof}
Write $E_n(z/a;\,c) = \sum_{j \geq 0} e_j z^j$.  By
Lemma~\ref{lem:structure}: $e_0 = 1$, $e_j = 0$ for $1 \leq j \leq n$,
$e_{n+1} = (c-1)/((n+1)a^{n+1})$.  Then $[z^m]F = \sum_{j=0}^m
h_{m-j}\,e_j$.

(i) For $m \leq n$: the vanishing $e_1 = \cdots = e_n = 0$ means only
$j=0$ contributes, giving $h_m$.

(ii) For $m=n+1$: terms $j=1,\ldots,n$ vanish, leaving
$h_{n+1}e_0 + h_0 e_{n+1} = h_{n+1} + (c-1)/((n+1)a^{n+1})$.
\end{proof}

\begin{remark}[Triangular-affine structure]
\label{rem:triangular}
Lemma~\ref{lem:affine} says that the coefficient map
\[
  c \mapsto \bigl([z^0](h\cdot E_n(z/a;\,c)),\,
                  [z^1](h\cdot E_n(z/a;\,c)),\,\ldots\bigr)
\]
is \emph{triangular with surjective affine diagonal}: the first $n+1$
entries $(m = 0,\ldots,n)$ are independent of $c$, and the entry at
$m = n+1$ is an affine surjection onto $\C$.  In this sense each $E_n(z/a;\,c)$ operates as a one-coefficient
corrector: it fixes one new Taylor coefficient without disturbing any
previously corrected one.  The resulting induction resembles Borel-type
interpolation or
triangular normal-form constructions in formal algebra, here carried
out in a convergent analytic product.
\end{remark}

\section{Proofs}
\label{sec:proofs}

\begin{proof}[Proof of Theorem~\ref{prop:Zi}]
Let $D$ have support $S$ with multiplicities $\{m_a\}_{a \in S}$.
Write $m_0 = m_0(D)$ for the multiplicity at $0$ (possibly zero).
Factor out the zero at the origin: if $m_0 > 0$, write $D = m_0[0] + D'$
where $D'$ has support in $S \setminus \{0\}$.  It suffices to realize
$D'$ by a function $g \in \OD$ with $\Zi$-coefficients and $g(0) \neq 0$
(produced by the construction below), and then set $f(z) = z^{m_0}g(z)$,
which has $\Zi$-coefficients and zero divisor $D$.

Henceforth assume $0 \notin S$.  If $S$ is finite (or empty), the
construction below terminates after finitely many factors: $P = \prod_{n}
E_n(z/a_{n+1};\,c_{n+1})$ is then a finite product of entire functions,
the convergence step is vacuous, and the inductive forcing realizes all
coefficients in $\Zi$ in finitely many stages.  We therefore assume $S$
is infinite, so that $A = \{a_n\}_{n=1}^\infty$ is an infinite multiset.
Let $A$ be the
multiset $S$ with multiplicities, enumerated with $\abs{a_n} \leq
\abs{a_{n+1}}$.  Such an enumeration exists and satisfies $\abs{a_n} \to 1$
because $S$ is discrete: every closed disk $\abs{z} \leq r \subset \D$
contains only finitely many points of $S$ (each with finite
multiplicity), so only finitely many terms of the sequence lie in
$\abs{z} \leq r$.  In particular $a_n \neq 0$ for all $n$.
Since $A$ is discrete in $\D$, each point of $S$ occurs with finite
multiplicity.  The construction depends on the enumeration: different
orderings and rounding choices yield different functions $f$ with the
same zero divisor.  The function produced is not canonical.
Set $P_0 = 1$ and
\[
  P_{N}(z) = \prod_{n=0}^{N-1}
  E_n\!\Bigl(\frac{z}{a_{n+1}};\,c_{n+1}\Bigr), \quad N \geq 1,
\]
with parameters $c_n \in \C$ chosen inductively.

\medskip\noindent\textbf{Step 1: Inductive coefficient forcing.}

\begin{claim}
For each $N \geq 0$, parameters $c_1,\ldots,c_N$ can be chosen so that
\begin{equation}
\label{eq:indhyp}
  [z^m]\,P_N \in \Zi \quad (0 \leq m \leq N),
\end{equation}
with the bounds
\begin{equation}
\label{eq:cnbound}
  \abs{c_{n} - 1} \leq \tfrac{\sqrt{2}}{2}\,n\,\abs{a_{n}}^{n},
  \quad 1 \leq n \leq N.
\end{equation}
\end{claim}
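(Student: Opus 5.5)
Induction on $N$, using Lemma~\ref{lem:affine} at each step to round one new coefficient to a Gaussian integer; the bound \eqref{eq:cnbound} comes from how far a complex number can be from $\Zi$.

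\emph{Base case and hypothesis.} For $N=0$, $P_0=1$ has $[z^0]P_0=1\in\Zi$, and \eqref{eq:cnbound} is vacuous. Suppose $c_1,\ldots,c_N$ have been chosen with \eqref{eq:indhyp} and \eqref{eq:cnbound}. Each factor is entire by Lemma~\ref{lem:structure}(v), so $P_N$ is entire and its Taylor series converges everywhere. By Lemma~\ref{lem:structure}(i) each factor has constant term $1$, so $h:=P_N$ satisfies $h_0=1$.

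\emph{Inductive step.} Set
\[
P_{N+1}(z)=P_N(z)\,E_N\!\Bigl(\frac{z}{a_{N+1}};\,c_{N+1}\Bigr),
\]
which is Lemma~\ref{lem:affine} with $n=N$, $a=a_{N+1}\neq 0$ and $h=P_N$.
\begin{itemize}
\item Part~(i) gives $[z^m]P_{N+1}=[z^m]P_N\in\Zi$ for $0\le m\le N$, whatever $c_{N+1}$ is.
\item Part~(ii) gives
\[
[z^{N+1}]P_{N+1}=h_{N+1}+\frac{c_{N+1}-1}{(N+1)a_{N+1}^{N+1}}.
\]
\end{itemize}
Let $q\in\Zi$ be a nearest Gaussian integer to $h_{N+1}$. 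Round each of the real and imaginary parts to the nearest integer; then $\abs{q-h_{N+1}}\le\sqrt{(1/2)^2+(1/2)^2}=\sqrt2/2$. Solve the affine equation $[z^{N+1}]P_{N+1}=q$:
\[
c_{N+1}=1+(N+1)\,a_{N+1}^{N+1}\,(q-h_{N+1}).
\]
This gives
\[
\abs{c_{N+1}-1}\le\tfrac{\sqrt2}{2}(N+1)\abs{a_{N+1}}^{N+1},
\]
which is \eqref{eq:cnbound} for $n=N+1$. The earlier bounds are unchanged because $c_1,\ldots,c_N$ are not altered.

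\emph{Main point to check.} The induction only works if each new factor leaves every previously rounded coefficient untouched. Lemma~\ref{lem:affine}(i) supplies exactly this, and it holds because $E_N$ has $e_1=\cdots=e_N=0$. What remains is bookkeeping:
\begin{itemize}
\item The claim is to be read as one sequence $(c_n)$ built consistently across all $N$. Each stage extends the previous choices and never revisits them, so it defines a single sequence serving all $N$ at once.
\item The indexing must match: stage $N$ uses the factor of order $n=N$ with zero $a_{N+1}$, and this is what corrects the degree-$(N+1)$ coefficient.
\end{itemize}
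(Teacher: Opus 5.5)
Your proposal is correct and matches the paper's proof step for step. Both argue by induction on $N$, apply Lemma~\ref{lem:affine} with $h=P_N$, $a=a_{N+1}$, $n=N$, round $[z^{N+1}]P_N$ to a nearest Gaussian integer (error at most $\sqrt{2}/2$), and solve the affine equation for $c_{N+1}$; your extra checks ($h_0=1$, convergence of $P_N$ near $0$, and building one sequence $(c_n)$ that serves all $N$) are points the paper leaves implicit.
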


\begin{proof}[Proof of Claim]
Induction on $N$.  The base case $N=0$ holds since $[z^0]P_0 = 1 \in
\Zi$.  Given the hypothesis at $N$, apply Lemma~\ref{lem:affine} with
$h = P_N$, $a = a_{N+1}$, $n = N$:
\begin{itemize}
  \item For $0 \leq m \leq N$: $[z^m]P_{N+1} = [z^m]P_N \in \Zi$
        (induction hypothesis and Lemma~\ref{lem:affine}(i); later
        factors affect only degrees $> N$, so this coefficient is now
        frozen).
  \item For $m = N+1$:
    $[z^{N+1}]P_{N+1} = [z^{N+1}]P_N + \dfrac{c_{N+1}-1}{(N+1)a_{N+1}^{N+1}}$
    (Lemma~\ref{lem:affine}(ii)).
\end{itemize}
The second line is affine in $c_{N+1}$ with nonzero slope.  Let $T \in
\Zi$ be any nearest Gaussian integer to $[z^{N+1}]P_N$ (rounding real
and imaginary parts independently; if there is more than one nearest
Gaussian integer, choose any one) and set
$c_{N+1}$ to achieve $[z^{N+1}]P_{N+1} = T \in \Zi$.  Then
\[
  \abs{c_{N+1}-1}
  = \abs{T - [z^{N+1}]P_N} \cdot (N+1)\abs{a_{N+1}}^{N+1}
  \leq \tfrac{\sqrt{2}}{2}(N+1)\abs{a_{N+1}}^{N+1},
\]
since the nearest-$\Zi$ rounding error satisfies $\abs{T-v} \leq
\sqrt{2}/2$.
\end{proof}

\medskip\noindent\textbf{Step 2: Freezing of coefficients.}

By Lemma~\ref{lem:affine}(i), once the factor $E_m$ has been introduced
at stage $m$, all subsequent factors contribute only in degrees $> m$.
Hence $[z^m]P_N$ is constant for all $N > m$, and the coefficient of
degree $m$ in the limit $f$ equals $[z^m]P_{m+1} \in \Zi$.

\medskip\noindent\textbf{Step 3: Convergence.}

Fix $0 < r < 1$ and choose $s$ with $r < s < 1$.  Since
$\abs{a_n} \to 1$, there exists $N_0$ such that $\abs{a_{n+1}} \geq s$
for all $n \geq N_0$; then $r/\abs{a_{n+1}} \leq r/s < 1$ for such $n$.
By \eqref{eq:Eexp}--\eqref{eq:Gdef} and \eqref{eq:cnbound}, for $n \geq
N_0$ and $\abs{z} \leq r$:
\begin{align*}
  \abs{G_n\!\bigl(z/a_{n+1};\,c_{n+1}\bigr)}
  &\leq \frac{\abs{c_{n+1}-1}}{n+1}\Bigl(\frac{r}{\abs{a_{n+1}}}\Bigr)^{n+1}
    + \sum_{k \geq n+2} \Bigl(\frac{r}{\abs{a_{n+1}}}\Bigr)^k \\
  &\leq \frac{\sqrt{2}}{2}\,r^{n+1}
    + \frac{(r/s)^{n+2}}{1-r/s}.
\end{align*}
The first term uses $\abs{c_{n+1}-1}/(n+1) \leq
\frac{\sqrt{2}}{2}\abs{a_{n+1}}^{n+1}$ from \eqref{eq:cnbound}, after
which the factor $\abs{a_{n+1}}^{n+1}$ cancels
$(r/\abs{a_{n+1}})^{n+1}$ down to $r^{n+1}$, independently of whether
$\abs{a_{n+1}} \lessgtr 1$.  The second uses $r/\abs{a_{n+1}} \leq r/s$
to sum the geometric tail; note that for zeros inside $\D$ one has
$\abs{a_{n+1}} < 1$, so $r/\abs{a_{n+1}} > r$ and the cruder bound
$r^{n+2}/(1-r)$ is \emph{not} available; the comparison must be made
against $s \leq \abs{a_{n+1}}$.  In particular
$\abs{G_n} = O\bigl((r/s)^{n}\bigr)$ uniformly on $\abs{z} \leq r$, so
$\sum_{n \geq N_0} \sup_{\abs{z}\leq r} \abs{G_n} < \infty$.  For
sufficiently large $n$,
$\sup_{\abs{z}\leq r}\abs{G_n} < 1/2$, and then $\abs{e^w - 1} \leq
2\abs{w}$ for $\abs{w} \leq 1/2$ gives $\abs{E_n - 1} = \abs{e^{G_n}
- 1} \leq 2\abs{G_n}$.  Since the tail factors are holomorphic and
nonzero on $\abs{z} \leq r$ (as $\abs{a_{n+1}} > r$), the standard
infinite-product criterion (if $f_n$ are holomorphic, nonzero on
compact $K$, and $\sum \sup_K \abs{f_n - 1} < \infty$, then $\prod
f_n$ converges uniformly on $K$ to a holomorphic nonzero limit; see
\cite[Ch.~5, Thm.~1.1]{Conway}) applies.  Since $r < 1$ was arbitrary,
$f$ is holomorphic on $\D$.

\medskip\noindent\textbf{Step 4: Zero set.}

By Lemma~\ref{lem:structure}(v), each factor $E_n(z/a_{n+1};\,c_{n+1})$
has a simple zero at $z = a_{n+1}$ and no other zeros.  Fix any point
$p \in \D$.  Since $A$ is discrete, $p$ appears only finitely many
times in the sequence, say at positions $j_1 < \cdots < j_\ell$ (so
$a_{j_t} = p$; here $\ell \geq 0$, and $\ell = 0$ if $p \notin A$).
The factor of index $n$ vanishes at $p$ exactly when $a_{n+1} = p$,
i.e.\ when $n + 1 \in \{j_1,\ldots,j_\ell\}$; the largest such index is
$n = j_\ell - 1$.  Hence for all $n \geq j_\ell$ the factor
$E_n(z/a_{n+1};\,c_{n+1})$ is nonzero at $p$ (its only zero is at
$a_{n+1} \neq p$).  Choose
$\varepsilon > 0$ small enough that the disk $\abs{z - p} \leq
\varepsilon$ contains no $a_{n+1}$ with $n \geq j_\ell$; then all tail
factors are holomorphic and nonzero on this neighborhood.
$\sum_{n \geq j_\ell} \sup_{|z-p| \leq \varepsilon} |E_n - 1| < \infty$
for small $\varepsilon > 0$; by the infinite-product criterion already
invoked in Step~3, the tail product converges uniformly near $p$ to a
holomorphic function nonvanishing at $p$.  The initial segment
$\prod_{n < j_\ell}$ vanishes at $p$ to order $\ell$ (each of the
$\ell$ factors with $a_{n+1} = p$ contributes a simple zero there).
Hence $f$ has a zero of order $\ell$ at $p$, and $\ell = 0$ gives
nonvanishing when $p \notin A$.  The constant term is
$f(0) = \prod_n 1 = 1$.\end{proof}

\begin{proof}[Proof of Theorem~\ref{thm:main}]
Necessity was established in the introduction.  For sufficiency, let
$D = \sum_{a \in S} m_a[a]$ be an effective divisor on $\D$ with
discrete support $S$ invariant under conjugation ($m_{\bar a} = m_a$
for all $a \in S$).  Enumerate the support with multiplicities as a
sequence $A = \{a_n\}$ where each $a \in S$ appears $m_a$ times.
Reorder so that $\abs{a_n} \leq \abs{a_{n+1}}$, and within each
modulus group arrange so that each nonreal $a$ is immediately followed
by its conjugate $\bar a$ (possible since $m_a = m_{\bar a}$).

Apply the construction of Theorem~\ref{prop:Zi} with $A$, grouping the
factors into two types designed to keep partial-product coefficients
real.  For each index $n \geq 0$, define:
\begin{itemize}
  \item \emph{Real zero}: a single $b \in A \cap \R$; use
        $F_n(z) = E_n(z/b;\,c_n)$ with $c_n \in \R$.
  \item \emph{Conjugate pair}: $\{a, \bar{a}\} \subset A$,
        $a \notin \R$; use
        $F_n(z) = E_n(z/a;\,c_n)\cdot E_n(z/\bar{a};\,\bar{c}_n)$.
\end{itemize}
In the conjugate-pair case the two factors are complex conjugates of each
other as functions of $z$, so their product has real Taylor coefficients.
Since each factor expands as $1 + \alpha z^{n+1} + O(z^{n+2})$ and
$1 + \bar\alpha z^{n+1} + O(z^{n+2})$ with
$\alpha = (c_n-1)/((n+1)a^{n+1})$, their product is
\[
  1 + (\alpha + \bar\alpha)z^{n+1} + O(z^{n+2})
  = 1 + 2\operatorname{Re}(\alpha)\,z^{n+1} + O(z^{n+2}),
\]
since the cross term $\alpha\bar\alpha z^{2n+2}$ appears only at degree
$2n+2 \geq n+2$.

In both slot types, Lemma~\ref{lem:affine}(i) gives exact non-interference
at degrees $0,\ldots,n$.  Each conjugate-pair slot consumes a single
inductive stage $n$, handling two zeros simultaneously; the indexing
of the coefficient-freezing argument is unchanged because both factors
in the pair have the same order $n$ and jointly contribute only at
degree $n+1$.  For the degree-$(n+1)$ adjustment: in the real-zero
slot it is $(c_n-1)/((n+1)b^{n+1}) \in \R$ (since $b, c_n \in \R$),
a nonzero real-linear function of $c_n$, hence surjective onto $\R$.
In the conjugate-pair slot, write $c_n = x + iy$; the adjustment is
\begin{align*}
  2\operatorname{Re}(\alpha)
  &= \frac{2\operatorname{Re}\bigl((c_n-1)\bar{a}^{n+1}\bigr)}{(n+1)\abs{a}^{2(n+1)}} \\
  &= \frac{2}{(n+1)\abs{a}^{2(n+1)}}
    \bigl(\operatorname{Re}(\bar{a}^{n+1})\,(x-1)
          - \operatorname{Im}(\bar{a}^{n+1})\,y\bigr),
\end{align*}
a nonconstant real-affine map $\C \to \R$, hence surjective onto $\R$
and in particular taking every integer value.  One chooses $c_n$ to
round the degree-$(n+1)$ coefficient to the nearest integer in $\Z$.

Because this affine map has a whole line of preimages, $c_n$ is not
determined; we fix it to be the preimage of minimal modulus, namely the
one along the gradient direction.  The map
$c_n - 1 \mapsto 2\operatorname{Re}(\alpha)$ has gradient of norm
$\frac{2}{(n+1)\abs{a}^{2(n+1)}}\abs{\bar a^{n+1}}
= \frac{2}{(n+1)\abs{a}^{n+1}}$, and the rounding target has modulus at
most $\tfrac12$, so this choice gives
\[
  \abs{c_n - 1}
  \;\leq\; \frac{1}{2}\cdot\frac{(n+1)\abs{a}^{n+1}}{2}
  \;=\; \frac{1}{4}(n+1)\abs{a}^{n+1},
\]
and likewise $\abs{\bar c_n - 1} = \abs{c_n - 1}$.  Thus both factors of
the pair satisfy the bound \eqref{eq:cnbound} (with room to spare), and
the real-zero slots satisfy it as well, with rounding error $\leq
\tfrac12$.  The Step~3 estimate therefore applies verbatim to each
individual factor; a conjugate-pair slot contributes two such factors of
common order $n$, at most doubling the per-stage bound, so
$\sum_n \sup_{\abs{z}\leq r}\abs{G_n} < \infty$ still holds and the
product converges on $\D$.  The zero-set argument is unchanged.

The resulting function has zero set exactly $A$ and Taylor coefficients in $\Z$.
\end{proof}

\begin{remark}[Alternative symmetrization approach]
An alternative route to Theorem~\ref{thm:main} would be to apply
Theorem~\ref{prop:Zi} to realize $D$ by some $g \in \mR$, then form
$f(z) = g(z)\,\overline{g(\bar z)}$.  The function $\overline{g(\bar z)}$
has Taylor coefficients $\overline{b_n}$ (conjugates of those of $g$),
and the product has coefficients $\sum_k b_k \overline{b_{n-k}} \in \R$
when $D$ is conjugation-stable, but these are not generally integers.
The direct slot-factor argument avoids this issue by maintaining real
integer coefficients throughout the induction, without passing through a
$\Zi$-coefficient intermediate.
\end{remark}

\begin{proposition}[Nowhere-vanishing $\Zi$-coefficient functions]
\label{prop:nv}
Let $\{a_n\}_{n=1}^{\infty} \subset \C \setminus \overline{\D}$ with
$\abs{a_n} \to 1$.  There exists $f \in \OD$ with Taylor coefficients
in $\Zi$, nowhere vanishing on $\D$, with $f(0) = 1$.
\end{proposition}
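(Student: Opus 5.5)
We run the construction from the proof of Theorem~\ref{prop:Zi} verbatim, now with the sequence $\{a_n\}$ lying outside $\overline{\D}$. Set
\[
  P_N(z) = \prod_{n=0}^{N-1} E_n\bigl(z/a_{n+1};\,c_{n+1}\bigr),
\]
and choose the parameters $c_{n+1}$ inductively as in Step~1. At stage $N$, Lemma~\ref{lem:affine} (with $h = P_N$, $a = a_{N+1} \neq 0$, $n = N$) has two effects. All coefficients of degree $\leq N$ stay frozen. The coefficient of degree $N+1$ is an affine function of $c_{N+1}$ with nonzero slope, so we choose $c_{N+1}$ to move it to a nearest Gaussian integer. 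This gives \eqref{eq:indhyp} together with the bound $\abs{c_n-1} \leq \tfrac{\sqrt2}{2}\,n\,\abs{a_n}^n$ from \eqref{eq:cnbound}. Exactly as in Step~2, every Taylor coefficient of the limit is then the frozen value $[z^m]P_{m+1} \in \Zi$. The constant term is $1$ by Lemma~\ref{lem:structure}(i).

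The step needing care is convergence. The identity \eqref{eq:Eexp} is valid whenever $w = z/a_{n+1} \in \D$. Since $\abs{a_{n+1}} > 1 > \abs{z}$, this holds for every $z \in \D$ and every $n$. Fix $r < 1$ and take $\abs{z} \leq r$. Since $\abs{a_{n+1}} > 1$, we have $r/\abs{a_{n+1}} \leq r$, which makes the estimate easier than in Step~3. Using \eqref{eq:cnbound},
\[
  \abs{G_n(z/a_{n+1};c_{n+1})} \leq \tfrac{\sqrt2}{2}\,r^{n+1} + \frac{r^{n+2}}{1-r},
\]
because $\abs{c_{n+1}-1}/(n+1)$ is at most $\tfrac{\sqrt2}{2}\abs{a_{n+1}}^{n+1}$, and this factor cancels against $(r/\abs{a_{n+1}})^{n+1}$. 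The bound is summable and uniform in $n$, with no need for the auxiliary radius $s$. We then argue as in Step~3: for large $n$ we have $\abs{E_n - 1} \leq 2\abs{G_n}$, and the infinite-product criterion of \cite[Ch.~5, Thm.~1.1]{Conway} applies. Hence the product converges uniformly on $\abs{z} \leq r$ to a holomorphic limit. The hypothesis $\abs{a_n} \to 1$ is not actually used here; all that matters is $\abs{a_n} > 1$ and $a_n \neq 0$.

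Nonvanishing comes essentially for free. By Lemma~\ref{lem:structure}(iv), every factor equals $e^{G_n}$ on $\D$ and so is nonzero there. Each partial product is therefore nonzero on $\D$. Moreover, the criterion above, applied to the whole product (every factor is holomorphic and nonzero on $\abs{z}\leq r$), gives a nonvanishing limit. Equivalently, $\sum_n G_n$ converges locally uniformly on $\D$ to a holomorphic function $G$, and $f = e^{G}$. The main thing to check is that the rounding bound \eqref{eq:cnbound} carries over unchanged when $\abs{a_n} > 1$. It does, because the proof of the Claim never used $\abs{a_n} < 1$: the factor $\abs{a_{n+1}}^{n+1}$, although now large, is exactly cancelled in the $G_n$ estimate.
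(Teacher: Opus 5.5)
Your proposal is correct and follows the paper's own proof: rerun the construction of Theorem~\ref{prop:Zi}, observe that the Step~3 estimate only gets easier when $\abs{a_{n+1}}>1$, and deduce nonvanishing from the fact that every factor is nonzero on $\D$. Your version is slightly tighter than the paper's in two ways: the bound on $\abs{G_n}$ holds for every $n$, so you get $f=\exp\bigl(\sum_n G_n\bigr)$ directly, and you correctly note that the hypothesis $\abs{a_n}\to 1$ is never used.
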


\begin{proof}
The proof of Theorem~\ref{prop:Zi} applies without change.  Since
each $a_{n+1} \notin \D$, the zero of each factor lies outside $\D$,
so $f$ is nowhere vanishing on $\D$.  For large $n$,
$r/\abs{a_{n+1}} < r < 1$ (since $\abs{a_n} \to 1$ from above), and
the convergence estimate is unchanged.
\end{proof}

\section{A Worked Example}
\label{sec:example}

We illustrate the inductive rounding step.  Take $a_1 = \frac{1}{3} +
\frac{i}{4} \in \D$.

\medskip\noindent\textbf{Step $N=0$.}  We form $P_1 = E_0(z/a_1;\,c_1)$.
By Lemma~\ref{lem:affine}(ii), $[z^1]P_1 = (c_1-1)/a_1$.  The nearest
$\Zi$-element to $[z^1]P_0 = 0$ is $T_1 = 0$, so $c_1 = 1$ (the classical
factor) and $[z^1]P_1 = 0 \in \Zi$.

\medskip\noindent\textbf{Step $N=1$.}  With $c_1 = 1$, the factor
$E_0(z/a_1;\,1) = (1-z/a_1)e^{z/a_1}$ gives $[z^2]P_1 = -1/(2a_1^2)
\approx -0.806 + 2.765i$, which is not a Gaussian integer.
The nearest element of $\Zi$ is $T_2 = -1+3i$; the rounding error
$\abs{T_2 - [z^2]P_1} = \sqrt{58}/25 \approx 0.305$ satisfies the
bound $\leq \sqrt{2}/2$.  A second zero $a_2$ introduces the factor
$E_1(z/a_2;\,c_2)$, which shifts $[z^2]$ by $(c_2-1)/(2a_2^2)$.
We choose $c_2$ to achieve $[z^2]P_2 = -1+3i \in \Zi$.

After these two steps: $[z^0]P_2 = 1$, $[z^1]P_2 = 0$,
$[z^2]P_2 = -1+3i$, all in $\Zi$.  At step~0 the classical choice
$c_1 = 1$ sufficed; at step~1 the classical product gave a non-integer
coefficient and nontrivial rounding was required.

\section{Ring-Theoretic Consequences}
\label{sec:algebra}

Let $\mR = \Zi[[z]] \cap \OD$ and $\mRR = \Z[[z]] \cap \OD$, and write
$\mathfrak{n}_0 = \{g \in \mR : g(0) = 0\}$ for the augmentation ideal.
We derive only what follows directly from the main results, leaving the
deeper algebraic structure of $\mR$ for Section~\ref{sec:open}.

We first record that the units of $\mR$ are completely determined: a
nowhere-vanishing element with admissible constant term is automatically
invertible, so the construction of Proposition~\ref{prop:nv} in fact
produces units.

\begin{proposition}[Units of $\mR$]
\label{prop:units}
An element $f \in \mR$ is a unit if and only if $f(0) \in \Zi^\times =
\{\pm 1, \pm i\}$ and $f$ is nowhere vanishing on $\D$.  The analogous
statement holds for $\mRR$, with $\Zi^\times$ replaced by
$\Z^\times = \{\pm 1\}$.
\end{proposition}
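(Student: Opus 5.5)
The two directions are handled separately. Necessity comes from evaluation at the origin and pointwise nonvanishing. Sufficiency comes from the fact that $1/f$ is holomorphic and that the inverse of a power series whose constant term is a unit has coefficients in the same ring.

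For necessity, suppose $fg = 1$ with $g \in \mR$. Evaluating at $0$ gives $f(0)g(0) = 1$ with $f(0), g(0) \in \Zi$, so $f(0) \in \Zi^\times$. Evaluating at any $z \in \D$ gives $f(z)g(z) = 1$, so $f(z) \neq 0$.

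For sufficiency, assume $f(0) = u \in \Zi^\times$ and $f$ has no zeros on $\D$.

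\begin{enumerate}
\item[(a)] Since $f$ is holomorphic and nowhere vanishing on $\D$, $1/f \in \OD$. Its Taylor expansion at $0$ converges on all of $\D$, because the disk of convergence of a holomorphic function's Taylor series at the center of a disk on which it is holomorphic is at least that disk.
\item[(b)] Write $f = u(1 - q)$ with $q \in z\,\Zi[[z]]$. In the formal ring $\Zi[[z]]$ the series
\[
  u^{-1}\sum_{k \geq 0} q^k
\]
makes sense $z$-adically, since $q^k$ has order at least $k$. It has $\Zi$ coefficients, because $u^{-1} \in \Zi$ and $\Zi[[z]]$ is a ring. Equivalently, the coefficient recursion $g_0 = u^{-1}$ and $g_m = -u^{-1}\sum_{j=1}^{m} f_j g_{m-j}$ stays in $\Zi$.
\item[(c)] The formal inverse is unique in $\C[[z]]$. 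The Taylor series of $1/f$ is also a formal inverse of $f$ in $\C[[z]]$, since Taylor expansion of a product is the Cauchy product. So the two series agree, and $1/f$ has $\Zi$ coefficients and converges on $\D$. Hence $1/f \in \mR$ and $f$ is a unit.
\end{enumerate}

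The same argument works verbatim for $\mRR$ with $\Z$ and $\Z^\times = \{\pm 1\}$.

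The only point needing care is step (c): identifying the analytically defined $1/f$ with the formal inverse. Uniqueness of inverses in $\C[[z]]$ settles it, so I expect no real obstacle.
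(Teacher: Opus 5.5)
Your proposal is correct and follows essentially the same route as the paper. Necessity comes from evaluating $fg=1$ at $0$ and at each point of $\D$; sufficiency builds the formal inverse in $\Zi[[z]]$ via the triangular recurrence (using $f(0)^{-1}\in\Zi$) and identifies it with the Taylor series of the holomorphic function $1/f$, which you justify via uniqueness of inverses in $\C[[z]]$ and the Cauchy-product rule, where the paper simply cites uniqueness of power-series expansion.
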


\begin{proof}
($\Rightarrow$) If $fg = 1$ with $g \in \mR$ then $f(0)g(0) = 1$ in
$\Zi$, so $f(0) \in \Zi^\times$; and $f(z)g(z) = 1$ for all $z \in \D$
forces $f$ nowhere-vanishing on $\D$.

($\Leftarrow$) Suppose $u := f(0) \in \Zi^\times$ and $f$ has no zero in
$\D$.  Writing $f = u(1 + a_1 z + a_2 z^2 + \cdots)$ with $a_k =
u^{-1}[z^k]f \in \Zi$ (as $u^{-1} \in \Zi$), the constant term $1$ is a
unit of the ring $\Zi[[z]]$, so $f$ has a formal inverse $f^{-1} \in
\Zi[[z]]$ (its coefficients are computed by the usual triangular
recurrence, which stays in $\Zi$).  On the other hand, since $f$ is
holomorphic and nowhere zero on $\D$, the analytic reciprocal $1/f$ is
holomorphic on $\D$, i.e.\ $1/f \in \OD$.  Its Taylor expansion at $0$
coincides with the formal inverse $f^{-1}$ by uniqueness of power-series
expansion, so $f^{-1} \in \Zi[[z]] \cap \OD = \mR$.  Hence $f$ is a unit.
The argument for $\mRR$ is identical with $\Z$ in place of $\Zi$.
\end{proof}

\begin{remark}
\label{rem:nv-units}
In particular every function produced by Proposition~\ref{prop:nv} (all
zeros placed outside $\overline{\D}$, $f(0) = 1$) is a unit of $\mR$, so
$\mR^\times$ contains transcendental units with prescribed zero sets in
$\C \setminus \overline{\D}$ and is far larger than $\Zi^\times$.
There is no nowhere-vanishing $f \in \mR$ with $f(0) \in \Zi^\times$
that fails to be a unit; an apparent counterexample such as $1 + 2z$
($f(0) = 1$, formal inverse $\sum(-2)^n z^n \notin \OD$) is excluded
precisely because it \emph{does} vanish in $\D$, at $z = -\tfrac12$.
Note also that no \emph{polynomial} in $\mRR$ with $f(0) = \pm 1$ can be
nowhere-vanishing on $\D$ without being a unit: such an $f$ would have
all roots in $\abs{z} \geq 1$, and comparing the constant and leading
coefficients via $\prod \abs{\rho_j} = 1/\abs{c_d} \leq 1$ forces every
root onto $\partial\D$, whence (Kronecker) the roots are roots of unity
and $1/f \in \mRR$.  The genuinely interesting units are therefore the
transcendental ones.
\end{remark}

The key algebraic consequence of Theorem~\ref{prop:Zi} is that every
element of $\OD$ is divisible by an element of $\mR$ up to a unit.
Corollary~\ref{cor:ideals} and Proposition~\ref{prop:inject} flow directly
from this: the former says every ideal of $\OD$ is generated by its
$\mR$-elements, and the latter uses this to show the contraction map is
injective.

\begin{proposition}[Factorization up to units in $\OD$]
\label{prop:associate}
Every $f \in \OD$ can be written $f = g \cdot u$ with $g \in \mR$ and
$u \in \OD^\times$.  Here $f$ and $g$ generate the same principal ideal
in $\OD$.  The unit $u$ need not lie in $\mR$, so this is not a
factorization within $\mR$; in particular it does not imply that $\mR$
is divisor-theoretically rich.
\end{proposition}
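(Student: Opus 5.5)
The plan is to realize the zero divisor of $f$ by an element of $\mR$ using Theorem~\ref{prop:Zi}, and then to divide. First I dispose of the trivial case $f \equiv 0$: take $g = 0$ and $u = 1$. Otherwise $f \not\equiv 0$, and its zero divisor $D = \operatorname{div}(f)$ is an effective divisor on $\D$. Its support is discrete in $\D$ and every multiplicity is finite, by the identity theorem, since $\D$ is connected and $f$ is not identically zero. Theorem~\ref{prop:Zi} then supplies $g \in \OD$ with Taylor coefficients in $\Zi$ and $\operatorname{div}(g) = D$. In particular $g \in \mR$ and $g \not\equiv 0$.

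Next I set $u = f/g$. This is meromorphic on $\D$, and at each point $p$ the orders of vanishing of $f$ and of $g$ coincide (both equal the multiplicity $m_p$ of $D$ at $p$). So every singularity of $u$ is removable, and the extended function is holomorphic with order $0$ everywhere, i.e.\ it has no zeros on $\D$. Likewise $1/u = g/f$ extends holomorphically. Hence $u \in \OD^\times$ and $f = g u$.

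The equality of principal ideals follows immediately: $f = g u$ gives $f\OD \subseteq g\OD$, and $g = f u^{-1}$ gives the reverse inclusion. The final sentences of the statement are disclaimers, and I will justify them by example. The unit $u$ typically lies outside $\mR$; for instance, when $f$ is itself a unit of $\OD$ with non-integral Taylor coefficients, such as $f = \tfrac12$, $g$ must be a unit of $\OD$ lying in $\mR$, and $u = f/g$ is not in $\Zi[[z]]$.

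No step is a genuine obstacle, since Theorem~\ref{prop:Zi} does all the work. The only point needing care is confirming that $D$ meets the hypotheses of Theorem~\ref{prop:Zi}, namely that it is discrete with finite multiplicities, including at $0$. Since that theorem treats $0$ by factoring out $z^{m_0}$, the multiplicity at the origin is handled automatically.
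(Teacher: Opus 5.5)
Your proof is correct and follows the paper's argument: apply Theorem~\ref{prop:Zi} to $\operatorname{div}(f)$, then observe that $u=f/g$ extends to a zero-free holomorphic function on $\D$. Your treatment of $f\equiv 0$, which the paper omits, and your example $f=\tfrac12$ for the disclaimer are good additions; the cleanest justification of the example is that $u\in\mR$ would force $f=gu\in\mR$.
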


\begin{proof}
Apply Theorem~\ref{prop:Zi} to the zero divisor of $f$ to obtain
$g \in \mR$ with the same zeros and multiplicities.  Since $f$ and $g$
have the same zeros with the same multiplicities, the ratio $u = f/g$
extends to a holomorphic function on $\D$ with no zeros, i.e.,
$u \in \OD^\times$.
\end{proof}

\begin{corollary}
\label{cor:ideals}
Every ideal $I \subset \OD$ satisfies $I = (I \cap \mR)\cdot\OD$.
\end{corollary}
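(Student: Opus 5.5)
We prove the two inclusions separately, and the reverse one is where Proposition~\ref{prop:associate} does the work. The inclusion $(I \cap \mR)\cdot\OD \subseteq I$ is immediate. We have $I \cap \mR \subseteq I$ by definition. Moreover $I$ is an ideal of $\OD$, so it is closed under multiplication by elements of $\OD$ and under finite sums. Since $(I \cap \mR)\cdot\OD$ means the set of finite sums $\sum g_j h_j$ with $g_j \in I \cap \mR$ and $h_j \in \OD$, every such sum lies in $I$.

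For the reverse inclusion $I \subseteq (I \cap \mR)\cdot\OD$, fix $f \in I$.

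\begin{itemize}
\item If $f = 0$, then $f = 0 \cdot 1$ with $0 \in I \cap \mR$, and there is nothing to prove.
\item Otherwise, apply Proposition~\ref{prop:associate} to write $f = g\,u$ with $g \in \mR$ and $u \in \OD^\times$.
\end{itemize}

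The key point is that $g$ itself lies in $I$. Indeed, $g = f\,u^{-1}$ with $u^{-1} \in \OD$, and $I$ is an ideal of $\OD$, so $g \in I$. Hence $g \in I \cap \mR$. Then $f = g \cdot u$ exhibits $f$ as an element of $(I \cap \mR)\cdot\OD$, which gives the inclusion.

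No analytic estimates are needed at this stage: all the content sits in Theorem~\ref{prop:Zi} through Proposition~\ref{prop:associate}. The only point requiring care is that the unit $u$ must be invertible in $\OD$ and not merely in $\mR$. This is exactly what Proposition~\ref{prop:associate} provides, because $g$ and $f$ have the same zero divisor, so $f/g$ extends holomorphically and without zeros to $\D$. One should also note that the argument never needs $I$ to be finitely generated or principal. Each element of $I$ is handled individually, so the identity holds for arbitrary ideals, including non-finitely-generated ones.
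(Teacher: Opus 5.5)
Your proof is correct and follows the paper's argument. You factor $f = gu$ via Proposition~\ref{prop:associate}, observe $g = fu^{-1} \in I \cap \mR$, and conclude $f \in (I\cap\mR)\cdot\OD$; your explicit handling of the easy inclusion and of $f = 0$ (where there is no locally finite zero divisor to feed into Theorem~\ref{prop:Zi}) supplies small details the paper leaves implicit.
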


\begin{proof}
For any $f \in I$, write $f = gu$ with $g \in \mR$, $u \in \OD^\times$.
Then $g = fu^{-1} \in I$, so $g \in I \cap \mR$ and
$f = gu \in (I \cap \mR)\cdot\OD$.
\end{proof}

We now compare $\mathrm{MaxSpec}(\OD)$ and $\mathrm{Spec}(\mR)$.  We
make no use of any classification of maximal ideals of $\OD$.  The
inclusion $\mR \hookrightarrow \OD$ induces a contraction map
\[
  \phi \colon \mathrm{MaxSpec}(\OD) \to \mathrm{Spec}(\mR),
  \qquad \mathfrak{m} \mapsto \mathfrak{m} \cap \mR.
\]
Note that the contraction of a maximal ideal of $\OD$ need not be
maximal in $\mR$; $\phi$ maps into $\mathrm{Spec}(\mR)$, not
$\mathrm{MaxSpec}(\mR)$.  The map is well-defined: $1 \notin \mathfrak{m}
\cap \mR$ (since $1 \notin \mathfrak{m}$), and $\mathfrak{m} \cap \mR
\neq 0$ because any nonzero $f \in \mathfrak{m}$ factors as $f = gu$
with $g \in \mR$, $u \in \OD^\times$, giving $g = fu^{-1} \in
\mathfrak{m} \cap \mR$.

\begin{proposition}
\label{prop:inject}
The map $\phi$ is injective.
\end{proposition}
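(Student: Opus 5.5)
The plan is to deduce injectivity directly from Corollary~\ref{cor:ideals}; no information about the structure of maximal ideals of $\OD$ is needed. Suppose $\mathfrak{m}_1, \mathfrak{m}_2 \in \mathrm{MaxSpec}(\OD)$ satisfy $\phi(\mathfrak{m}_1) = \phi(\mathfrak{m}_2)$, i.e.\ $\mathfrak{m}_1 \cap \mR = \mathfrak{m}_2 \cap \mR$. Applying Corollary~\ref{cor:ideals} to each ideal gives
\[
  \mathfrak{m}_1 = (\mathfrak{m}_1 \cap \mR)\cdot\OD = (\mathfrak{m}_2 \cap \mR)\cdot\OD = \mathfrak{m}_2 ,
\]
so $\phi$ is injective.

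The argument rests entirely on Corollary~\ref{cor:ideals}, so the only thing to check is that it applies to arbitrary (possibly non-finitely-generated, non-closed) ideals of $\OD$. It does: its proof is elementwise. Each $f \in I$ factors as $f = gu$ by Proposition~\ref{prop:associate}, with $g \in \mR$ and $u \in \OD^\times$. Then $g = fu^{-1} \in I \cap \mR$, so $f \in (I\cap\mR)\OD$. The reverse inclusion $(I\cap\mR)\OD \subseteq I$ is immediate because $I$ is an ideal of $\OD$.

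The case $f = 0$ is trivial, since $0 \in (I \cap \mR)\OD$. For $f \neq 0$ with a divisor of infinite support, Theorem~\ref{prop:Zi} covers arbitrary discrete effective divisors. So Proposition~\ref{prop:associate} holds for every nonzero $f$, and the corollary holds for every ideal.

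Consequently there is no genuine obstacle: the substance lies in Theorem~\ref{prop:Zi}, which is already established. The same argument actually shows more, namely that the contraction map $I \mapsto I \cap \mR$ is injective on the set of all ideals of $\OD$, not just maximal ones. Maximality is used only to ensure that $\phi$ lands in $\mathrm{Spec}(\mR)$, which was checked before the statement.
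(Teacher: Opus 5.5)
Your proof is correct and rests on the same ingredient as the paper's, namely the factorization $f = gu$ with $g \in \mR$ and $u \in \OD^\times$, packaged as Corollary~\ref{cor:ideals}. The paper uses it only to get the inclusion $\mathfrak{m} \subseteq \mathfrak{m}'$ and then invokes maximality. Your symmetric use of the corollary gives equality directly, so, as you note, it shows that $I \mapsto I \cap \mR$ is injective on all ideals of $\OD$.
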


\begin{proof}
Suppose $\mathfrak{m} \cap \mR = \mathfrak{m}' \cap \mR$.  For any
$f \in \mathfrak{m}$, write $f = gu$ with $g \in \mR$,
$u \in \OD^\times$.  Then $g \in \mathfrak{m} \cap \mR =
\mathfrak{m}' \cap \mR \subseteq \mathfrak{m}'$, so $f = gu \in
\mathfrak{m}'$.  Thus $\mathfrak{m} \subseteq \mathfrak{m}'$, and by
maximality $\mathfrak{m} = \mathfrak{m}'$.
\end{proof}

\begin{proposition}
\label{prop:fiber}
The augmentation ideal satisfies $\mR/\mathfrak{n}_0 \cong \Zi$.  The
primes of $\mR$ containing $\mathfrak{n}_0$ biject with
$\mathrm{Spec}(\Zi)$.  Among them, only $\mathfrak{n}_0$ itself lies
in $\mathrm{im}(\phi)$ (as $\phi(\mathfrak{m}_0) = \mathfrak{n}_0$
where $\mathfrak{m}_0 = \ker(\mathrm{ev}_0) \subset \OD$); no prime
of $\mR$ strictly containing $\mathfrak{n}_0$ lies in $\mathrm{im}(\phi)$.
\end{proposition}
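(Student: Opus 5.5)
Restrict the evaluation map $\mathrm{ev}_0\colon \mR \to \Zi$, $g \mapsto g(0)$, to $\mR$. It is a ring homomorphism with kernel $\mathfrak{n}_0$ by definition. It is surjective because the constants of $\Zi$ lie in $\mR$. Hence $\mR/\mathfrak{n}_0 \cong \Zi$. By the correspondence theorem, primes of $\mR$ containing $\mathfrak{n}_0$ biject with primes of $\mR/\mathfrak{n}_0 \cong \Zi$: $\mathfrak{p} \mapsto \mathrm{ev}_0(\mathfrak{p})$, with inverse $\mathfrak{q} \mapsto \mathrm{ev}_0^{-1}(\mathfrak{q}) = \mathfrak{q} + \mathfrak{n}_0$. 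The primes are $\mathfrak{n}_0$ itself (from $\mathfrak{q}=0$) and $\pi\Zi + \mathfrak{n}_0$ for Gaussian primes $\pi$.

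Next I would check that $\phi(\mathfrak{m}_0) = \mathfrak{n}_0$ for $\mathfrak{m}_0 = \ker(\mathrm{ev}_0) \subset \OD$. The ideal $\mathfrak{m}_0$ is maximal, since $\OD/\mathfrak{m}_0 \cong \C$. Its contraction is $\{g \in \mR : g(0)=0\} = \mathfrak{n}_0$, which is immediate.

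The main step is to show that no $\mathfrak{p} \supsetneq \mathfrak{n}_0$ is a contraction $\mathfrak{m} \cap \mR$. Suppose $\mathfrak{m}\cap\mR = \mathfrak{p} = \pi\Zi + \mathfrak{n}_0$. Then $\mathfrak{n}_0 \subseteq \mathfrak{m}$. In particular $z \in \mathfrak{n}_0 \subseteq \mathfrak{m}$, so $\mathfrak{m}$ contains the ideal $z\OD = \mathfrak{m}_0$. Indeed, every $h \in \OD$ with $h(0)=0$ factors as $z\cdot(h/z)$ with $h/z \in \OD$. Since $\mathfrak{m}_0$ is maximal, $\mathfrak{m} = \mathfrak{m}_0$. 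But then $\mathfrak{m}\cap\mR = \mathfrak{n}_0 \neq \mathfrak{p}$, because $\pi \in \mathfrak{p}$ while $\pi(0) = \pi \neq 0$. Equivalently, $\pi$ is a nonzero constant, hence a unit of $\OD$, so $\pi \in \mathfrak{m}$ is impossible for any proper ideal $\mathfrak{m}$. This already gives the contradiction directly.

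The only delicate point is identifying $\mathfrak{m}_0$ as the full ideal generated by $z$ in $\OD$, which is just division by $z$. So the argument is short, and the main care needed is stating the correspondence-theorem bijection explicitly, including the zero prime of $\Zi$.
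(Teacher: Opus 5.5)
Your proof is correct and follows the paper's argument. The paper likewise gets $\mR/\mathfrak{n}_0\cong\Zi$ from $\mathrm{ev}_0$ and the correspondence theorem. It rules out the primes above $\mathfrak{n}_0$ by noting that $\phi(\mathfrak{m})\supseteq\mathfrak{n}_0$ forces $\mathfrak{m}_0=\mathfrak{n}_0\cdot\OD\subseteq\mathfrak{m}$, hence $\mathfrak{m}=\mathfrak{m}_0$; you make the equality $\mathfrak{n}_0\cdot\OD=\mathfrak{m}_0$ explicit via division by $z$, where the paper leaves it implicit. Your alternative observation is an even shorter route: $\pi\Zi+\mathfrak{n}_0$ contains the nonzero constant $\pi$, which is a unit of $\OD$, so it cannot be the contraction of any proper ideal. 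That argument needs neither the maximality of $\mathfrak{m}_0$ nor the identification of $\mathfrak{n}_0\cdot\OD$.
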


\begin{proof}
Evaluation at $0$ gives a surjection $\mathrm{ev}_0\colon\mR \to \Zi$
with kernel $\mathfrak{n}_0$, so $\mR/\mathfrak{n}_0 \cong \Zi$.  The
bijection with $\mathrm{Spec}(\Zi)$ is the standard quotient correspondence.

For the last claim: if $\phi(\mathfrak{m}) \supseteq \mathfrak{n}_0$
then $\mathfrak{n}_0 \cdot \OD \subseteq \mathfrak{m}$.  But
$\mathfrak{n}_0 \cdot \OD = \{f \in \OD : f(0)=0\} = \mathfrak{m}_0$
is maximal, so $\mathfrak{m} = \mathfrak{m}_0$ and
$\phi(\mathfrak{m}) = \mathfrak{n}_0$, not any strictly larger ideal.
\end{proof}

\begin{remark}[The arithmetic fiber]
\label{rem:fiber}
Propositions~\ref{prop:inject} and~\ref{prop:fiber} together say that
$\mathrm{MaxSpec}(\OD)$ embeds injectively into $\mathrm{Spec}(\mR)$,
with image disjoint from the primes strictly above $\mathfrak{n}_0$.
Those primes (the maximal ideals of $\mR/\mathfrak{n}_0 \cong \Zi$,
i.e., the principal ideals $(1+i)$, $(2+i)$, $(3)$, $(2+3i)$, \ldots)
are arithmetic data present in $\mathrm{Spec}(\mR)$ but absent from
$\mathrm{MaxSpec}(\OD)$.

The analogous statements hold for $\mRR$ with $\Z$ in place of $\Zi$.
The natural map $\mathrm{Spec}(\Zi) \to \mathrm{Spec}(\Z)$ over the
origin is the classical one: $p \equiv 1 \pmod{4}$ splits in $\Zi$,
$p \equiv 3 \pmod{4}$ remains inert, and $p=2$ ramifies.
\end{remark}

\section{Further Questions}
\label{sec:open}

\begin{itemize}
\item \textbf{Group structure of $\mR^\times$.}
  Proposition~\ref{prop:units} identifies $\mR^\times$ as the set of
  nowhere-vanishing elements with constant term in $\Zi^\times$, and
  Proposition~\ref{prop:nv} shows this group is large.  The map
  $f \mapsto (f(0),\, f/f(0))$ splits $\mR^\times$ as
  $\Zi^\times \times U_1$, where $U_1 = \{f \in \mR^\times : f(0) = 1\}$
  is the group of nowhere-vanishing $\Zi$-series normalized at the
  origin.  Describing $U_1$ intrinsically (for instance, whether
  $\log f$ admits a useful normal form, or how $U_1$ sits inside the
  multiplicative group of nowhere-vanishing elements of $\OD$) remains
  open.

\item \textbf{Surjectivity of $\phi$.}  Whether every maximal ideal of
  $\mR$ not containing $\mathfrak{n}_0$ arises by contraction from
  $\OD$ is open.  Surjectivity would require showing that
  $\mathfrak{n}\cdot\OD$ is proper for every such maximal
  $\mathfrak{n} \subset \mR$.

\item \textbf{Algebraic structure of $\mR$.}  We are unaware of results
  concerning whether $\mR$ is coherent, integrally closed, or B\'ezout;
  see \cite{Stout} for background on rings of holomorphic functions.

\item \textbf{Coefficient growth and enumeration dependence.}
  The inductive construction shows that $\abs{[z^n]f}$ is controlled
  by the products of earlier partial-product values and the rounding
  parameters $c_k$, but the resulting coefficients can depend
  sensitively on the ordering of the zeros.  Concrete upper bounds on
  coefficient growth in terms of the zero distribution, and the
  question of whether the ordering can be chosen to minimize coefficient
  growth or force bounded coefficients, are natural problems that the
  construction raises but does not address.

\item \textbf{Growth-class variants.}  The construction extends in
  principle to subrings $\mathcal{A}_\omega = \{f \in \OD :
  \log M(r,f) = O(\omega(r))\}$ for suitable $\omega$.  Precise
  control of the growth of the modified product relative to the
  classical canonical product, and the associated ring-theoretic
  consequences, remain to be worked out.
\end{itemize}

\section*{Tool and Computational Resource Disclosure}

This manuscript was drafted and revised with the assistance of
Claude (Anthropic; primarily Claude Opus~4.6), a large language model used
interactively through Claude Code.  Claude contributed to composing
and revising portions of the exposition, and produced the
accompanying Lean~4 formalization of Theorems~\ref{thm:main}
and~\ref{prop:Zi} referenced in the footnote on the first page, built
using the Lean~4 proof assistant and the Mathlib library.  All output
produced with these tools was reviewed and verified by the authors,
who take full responsibility for the correctness and content of this
paper.

\end{document}